\def\thmheadbrackets#1#2#3{%
  \thmname{#1}\thmnumber{\@ifnotempty{#1}{ }\@upn{#2}}%
  \thmnote{ {\the\thm@notefont[#3]}}}
\newtheoremstyle{brakets}
  {}
  {}
  {\itshape}
  {}
  {\bfseries}
  {.}
  { }
  {\thmheadbrackets{#1}{#2}{#3}}
\theoremstyle{brakets}
\newtheorem{thm}{Theorem}[section]
\newtheorem{cor}[thm]{Corollary}
\newtheorem{lem}[thm]{Lemma}
\theoremstyle{definition}
\newtheorem{rk}[thm]{Remark}
\newcommand{\Div}{\mathrm{div}}
\newcommand{\Ric}{\mathrm{Ric}}
\newcommand{\Scal}{R}
\newcommand{\Hess}{\mathrm{Hess}}
\theoremstyle{remark}
\pgfplotsset{compat=1.18}
\numberwithin{equation}{section}
\begin{document}

\title{Killing Fields on Compact \MakeLowercase{\textit{m}}-Quasi-Einstein Manifolds}

\author{Eric Cochran}

\begin{abstract}
We show that given a compact, connected $m$-quasi Einstein manifold $(M,g,X)$ without boundary, the potential vector field $X$ is Killing if and only if $(M, g)$ has constant scalar curvature.  This extends a result of Bahuaud-Gunasekaran-Kunduri-Woolgar, where it is shown that $X$ is Killing if $X$ is incompressible.  We also provide a sufficient condition for a compact, non-gradient $m$-quasi Einstein metric to admit a Killing field.  We do this by following a technique of Dunajski and Lucietti, who prove that a Killing field always exists in this case when $m=2$.  This condition provides an alternate proof of the aforementioned result of Bahuaud-Gunasekaran-Kunduri-Woolgar.  This alternate proof works in the $m = -2$ case as well, which was not covered in the original proof.
\\

\end{abstract}

\maketitle

\section{Introduction}
A triple $(M, g, X)$ is said to satisfy the $m$-quasi Einstein equation if
   \begin{equation} 
   \label{qe}
   \textrm{Ric} + \frac{1}{2}\mathcal{L}_X g  - \frac{1}{m} X^* \otimes X^* = \lambda g.
   \end{equation}

\noindent Here, we let $X$ be a smooth vector field on the Riemannian manifold $(M, g)$, $X^*$ be the 1-form dual to $X$, $\mathcal{L}_X g$ be the Lie derivative of the metric along $X$, and $m \neq 0$ be a real number.  We call a solution \textit{trivial} if $X=0$.  Quasi-Einstein metrics when $m=2$ are called \textit{near horizon geometries} and are of particular interest in general relativity.  When $m=\infty$, one can interpret (1.1) as: 

$$\textrm{Ric} + \frac{1}{2}\mathcal{L}_X g = \lambda g.$$  

\noindent Solutions to the quasi-Einstein equation in this case are called \textit{Ricci solitons}, which are important in the study of Ricci flow.  This real parameter $m$ can therefore be viewed as interpolating between these two important special cases. \\

A number of rigidity results concerning quasi-Einstein metrics have been obtained.  In [\ref{b6}], Case-Shu-Wei study \textit{gradient solutions} to (1.1), i.e. solutions where $X=\nabla\phi$ for some smooth function $\phi$ on $M$.  One of their results says that there do not exist any non-trivial, compact, gradient solutions to (1.1) with constant scalar curvature.  We will see that this result follows the main result of this paper as well.  More generally, in [\ref{b7}], Bahuaud-Gunasekaran-Kunduri-Woolgar study closed, compact solutions to (1.1).  That is, they study solutions with $dX^* = 0$.  For $\lambda > 0$, they show that $X$ must be gradient. For $\lambda = 0$, Chruściel-Reall-Tod in $[\ref{b9}]$ show that $X=0$ and $(M, g)$ is Ricci-flat.  When $\lambda < 0$, Bahuaud-Gunasekaran-Kunduri-Woolgar show that $\Div X = 0$ and $|X|$ is constant.  In the $\lambda < 0$ case, Wylie further shows in $[\ref{b4}]$ that all solutions to $(1.1)$ are either trivial, or they split as a product $(S^1 \times N, d\theta^2 + g_N)$ where $N$ is Einstein and $g_N$ is the metric on $N$.  If, in addition, constant scalar curvature is assumed, then these metrics of the form $(S^1 \times N, d\theta^2 + g_N)$ are the only compact $m$-quasi Einstein manifolds with $X \neq 0$ and $dX^* = 0$.  \\

In [\ref{b8}], Chen-Liang-Zhu study $m$-quasi Einstein metrics on Lie groups.  They show that given a compact Lie group $G$ with a left invariant metric satisfying (1.1), then $X$ is left invariant and Killing.  In [\ref{b3}], Lim generalizes this to the compact quotient of a Lie group by some discrete group of isometries $\Gamma$.  Using this, Lim also classifies compact, locally homogeneous solutions to (1.1) in dimension 3.  Some examples of homogeneous solutions to (1.1) in dimension 3 were previously found by Barros-Ribeiro-Filho in [\ref{b10}]. 
 Some solutions given by Lim have $\lambda > 0$.  Since homogeneous manifolds always have constant scalar curvature, this shows that examples do exist when $dX^* \neq 0$, unlike in the previous paragraph when $dX^* = 0$. \\

It is natural to ask under what conditions is the vector field $X$ in solutions to (1.1) Killing.  In [\ref{b1}], Bahuaud-Gunasekaran-Kunduri-Woolgar show that given a solution to (1.1) with $M$ closed, $\Div X = 0$, and $m \neq -2$, $X$ is Killing.  It follows that $(M,g)$ has constant scalar curvature in this case, which is noted in [\ref{b11}] when $m=2$ (their argument is easily generalized to $m \neq 2$).  In Corollary 1.4, we see that this result of Bahuaud-Gunasekaran-Kunduri-Woolgar extends to the case when $m= -2$. \\

In section 2, we complete the picture by proving the converse: If constant scalar curvature is assumed, then $X$ is Killing.  Putting this result together with the result of Bahuaud-Gunasekaran-Kunduri-Woolgar mentioned above, we obtain the following theorem, which is the main result in this paper:

\begin{thm}
Let $(M, g, X)$ be a solution to 
    \begin{equation} \mathrm{Ric} + \frac{1}{2}\mathcal{L}_X g  - \frac{1}{m} X^* \otimes X^* = \lambda g. \end{equation} such that $(M,g)$ is closed (i.e. compact and without boundary) and $m\neq -2$.  Then $(M, g)$ has constant scalar curvature if and only if $X$ is Killing.
\end{thm}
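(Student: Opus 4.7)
\medskip
\noindent
The plan is to handle the two implications separately. The $(\Leftarrow)$ direction (Killing implies constant scalar curvature) is straightforward: assuming $X$ is Killing gives $\Div X = 0$, so tracing (1.1) yields $\Scal = n\lambda + \tfrac{1}{m}|X|^2$. Taking the divergence of (1.1) and combining the contracted Bianchi identity $\nabla^i \Ric_{ij} = \tfrac{1}{2}\nabla_j\Scal$ with the Killing identity $\nabla_X X = -\tfrac{1}{2}\nabla|X|^2$ produces $\nabla\Scal = -\tfrac{1}{m}\nabla|X|^2$. Together with the trace expression, this forces $2\Scal$ to be constant.

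The main content is the forward direction, so I assume $\Scal$ is constant and aim to deduce $\mathcal{L}_X g \equiv 0$. My plan is to contract (1.1) with $\mathcal{L}_X g$ itself, viewed as the inner product of symmetric $(0,2)$-tensors, and integrate over the closed manifold $M$. Four contributions appear: (i) the $\lambda g$ term, which yields $2\lambda\int_M \Div X = 0$ by the divergence theorem; (ii) the Ricci term $2\int_M \Ric^{ij}\nabla_i X_j$, which integrates by parts (via contracted Bianchi) to $-\int_M \langle X,\nabla\Scal\rangle = 0$ under the constant-scalar-curvature hypothesis; (iii) the $X^* \otimes X^*$ contraction, which equals $\int_M\langle X,\nabla|X|^2\rangle$ and integrates by parts to $-\int_M|X|^2\,\Div X$; and (iv) the self-contraction $\tfrac{1}{2}\int_M|\mathcal{L}_X g|^2$. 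The residual identity is
\[ \tfrac{1}{2}\int_M |\mathcal{L}_X g|^2 \;=\; -\tfrac{1}{m}\int_M |X|^2\,\Div X. \]

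To close the argument I would substitute the trace relation $\Div X = c + \tfrac{1}{m}|X|^2$, where $c := n\lambda - \Scal$ is constant, together with $\int_M |X|^2 = -mc\,\operatorname{Vol}(M)$, which follows from integrating $\Div X$ over the closed manifold. Short algebra converts the previous display to
\[ \int_M |\mathcal{L}_X g|^2 \;=\; 2c^2\operatorname{Vol}(M) \;-\; \tfrac{2}{m^2}\int_M|X|^4. \]
Cauchy--Schwarz gives the matching reverse bound $\int_M|X|^4 \geq (\int_M|X|^2)^2/\operatorname{Vol}(M) = m^2c^2\operatorname{Vol}(M)$, so the right-hand side is non-positive while the left-hand side is non-negative. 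Hence $\mathcal{L}_X g \equiv 0$ and $X$ is Killing. The non-routine step --- really the only obstacle --- is recognizing at the outset that pairing (1.1) with $\mathcal{L}_X g$ is precisely what makes the Cauchy--Schwarz bound land exactly on equality; the remainder is bookkeeping.
\medskip
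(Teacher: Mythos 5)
Your proof is correct, and it takes a genuinely different route from the paper's. The paper proves the forward direction by starting from a second-order identity of Bahuaud--Gunasekaran--Kunduri--Woolgar (their equation (2.24)) involving $\Delta |X|^2$, $\Delta \Div X$ and $|\mathcal{L}_Xg|^2$, substituting $|X|^2 = m\Div X - mc$, integrating, and invoking a separate integration-by-parts lemma for $\int_M (\Div X)^2\,dV$; this lands on $-\bigl(1+\tfrac{2}{m}\bigr)\int_M(\Div X)^2\,dV = \bigl(\tfrac12+\tfrac1m\bigr)\int_M|\mathcal{L}_Xg|^2\,dV$, which degenerates when $m=-2$ and is why the theorem carries that hypothesis. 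You instead pair the quasi-Einstein equation directly with $\mathcal{L}_Xg$ and integrate by parts via the contracted Bianchi identity, arriving at $\tfrac12\int_M|\mathcal{L}_Xg|^2\,dV = -\tfrac1m\int_M|X|^2\Div X\,dV$. Your closing step is more elaborate than it needs to be: substituting $|X|^2 = m\Div X - mc$ into that right-hand side gives $-\int_M(\Div X)^2\,dV + c\int_M \Div X\,dV = -\int_M(\Div X)^2\,dV$ immediately, so both sides vanish without introducing $\int_M|X|^4$ or Cauchy--Schwarz (your version is nonetheless correct and, reassuringly, reproduces exactly the paper's final identity $\int_M|\mathcal{L}_Xg|^2 = -2\int_M(\Div X)^2$). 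The payoff of your derivation is real: nothing in it uses $m\neq -2$, so you have actually proved the forward implication for all $m\neq 0$, strictly more than the stated theorem. Your backward direction is also self-contained (combining the trace with the divergence of the reduced equation to get $\nabla\Scal = \pm\tfrac1m\nabla|X|^2$), whereas the paper cites the constancy of $|X|$ from the BGKW reference; again no $m\neq-2$ is needed. The one presentational gap is that you never remark on this discrepancy with the stated hypothesis --- a careful write-up should either note explicitly that the hypothesis $m\neq -2$ is not used, or double-check the claim against the literature for $m=-2$; having checked your computation, I find no error.
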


Observe that Theorem 1.1 can be viewed as a generalization of the result of Chen-Liang-Zhu and Lim mentioned above since locally homogeneous spaces clearly have constant scalar curvature.  \\

If $X=0$, then the $m$-quasi-Einstein condition reduces to the condition that $(M, g)$ is Einstein.  Also, Theorem 1.1 says that if $(M, g)$ is Einstein, $X$ is Killing, and in this case (1.1) reduces to

 \begin{equation} 
   \textrm{Ric} - \frac{1}{m} X^* \otimes X^* = \lambda g 
   \end{equation}    

If $(M, g)$ is Einstein in this case, then either $X=0$, or $X^* \otimes X^*$ is proportional to the metric.  If $n>1$, one can choose two distinct linearly independent vectors from an orthornormal basis $\{e_i\}$.  Plugging this into (1.3) yields
    \[ \frac{1}{m}g(X, e_i)g(X, e_j) = \lambda g(e_i,e_j) = 0 \]
for $i \neq j$.  This clearly implies $X = 0$.  Thus, $n=1$ in this case. 
 The fact that $X = 0$ for $n \geq 3$ in this case is shown in [\ref{b13}] by Barros-Gomes.  Since the only compact 1-dimensional Riemannian manifold is $\mathbb{S}^1$, we obtain the following corollary to Theorem 1.1, which is also proved in [\ref{b3}, Proposition 6.7].

\begin{cor}[Barros-Gomes, \ref{b13}, Lim, \ref{b3}]
Let $(M, g, X)$ be a solution to (1.1) with $M$ closed and connected and suppose $(M, g)$ is Einstein.  Then either $M=\mathbb{S}^1$ or $X = 0$.
\end{cor}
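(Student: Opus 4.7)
The plan is to apply Theorem 1.1 to eliminate $\mathcal{L}_X g$ from (1.1) and then run a pointwise rank argument on the resulting tensor identity.

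First, since $(M,g)$ is Einstein its scalar curvature is constant, so Theorem 1.1 applies and forces $X$ to be Killing. Substituting $\mathcal{L}_X g = 0$ into (1.1) yields the reduced equation (1.3),
\[
\Ric - \frac{1}{m} X^* \otimes X^* = \lambda g.
\]
Writing $\Ric = \mu g$ for the Einstein constant $\mu$ and rearranging, I would obtain the pointwise identity
\[
X^* \otimes X^* = c\, g, \qquad c := m(\mu - \lambda),
\]
between symmetric $(0,2)$-tensors, with $c$ a real constant on $M$.

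Next I would compare ranks pointwise. At any $p \in M$, the left-hand side is a symmetric tensor of rank at most one, while $c\, g_p$ has rank $n$ as soon as $c \neq 0$. If $n \geq 2$ these ranks cannot match unless $c = 0$; tracing the identity then gives $|X|^2 = c n = 0$ and hence $X \equiv 0$. Equivalently, one can argue as in the discussion preceding the corollary: at a point $p$ choose an orthonormal basis $\{e_i\}$ of $T_p M$, evaluate the identity on pairs $(e_i,e_j)$ with $i \neq j$, and read off $g(X,e_i) g(X,e_j) = 0$, which forces $X_p = 0$.

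In the remaining case $n = 1$, closedness and connectedness leave $\mathbb{S}^1$ as the only possibility, completing the proof. I do not see any serious obstacle; the only caveat is administrative, namely that Theorem 1.1 is stated for $m \neq -2$, so the value $m = -2$ in the corollary either needs to be imposed as a hypothesis or handled through the $m = -2$--compatible technique referenced in the abstract.
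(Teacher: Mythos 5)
Your proof is correct and follows essentially the same route as the paper's: Einstein implies constant scalar curvature, Theorem 1.1 then makes $X$ Killing, and the reduced identity $\Ric - \frac{1}{m}X^* \otimes X^* = \lambda g$ forces $X = 0$ for $n \geq 2$ via the same orthonormal-basis (rank) argument, leaving $M = \mathbb{S}^1$ when $n = 1$. Your caveat about $m = -2$ is well taken, since the paper's own derivation of the corollary also passes through Theorem 1.1 and hence implicitly carries that restriction without flagging it (the attribution to Barros--Gomes and Lim covers the general case).
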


In section 3, we turn to a result due to Dunajski and Lucietti in [\ref{b2}], which states that given a non-gradient solution $(M, g, X)$ to (1.1), a non-trivial Killing field $K$ exists on $(M, g)$ when $m=2$.  Unfortunately, the technique used to show this does not seem to guarantee $K$ is Killing when $m \neq 2$.  Instead, we obtain a condition for when $K$ must be Killing, following the technique used in [\ref{b2}].  The condition is as follows:

\begin{thm}
Let $(M, g, X)$ be a solution to (1.1) with $M$ closed.  Let 
$$K = \frac{2}{m}\Gamma X + \nabla \Gamma$$ where $\Gamma > 0$ is a positive, smooth function defined on $M$ such that $\Div K = 0$.  Then $K$ is Killing if and only if

\[ (m-2) \int_{M} \Ric (\nabla \Gamma, K) \, dV = 0.\]
\end{thm}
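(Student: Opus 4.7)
The plan is to establish the single integral identity
\[
\int_M |\mathcal{L}_K g|^2\, dV \;=\; -\frac{4(m-2)}{m}\int_M \Ric(\nabla\Gamma, K)\, dV,
\]
from which the theorem follows at once: $K$ is Killing iff $\mathcal{L}_K g \equiv 0$ iff the left-hand side vanishes, which, since the coefficient $-4/m$ is nonzero, happens iff $(m-2)\int_M \Ric(\nabla\Gamma, K)\, dV = 0$. The Dunajski--Lucietti case $m=2$ is then recovered for free.

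The first step is to put $\mathcal{L}_K g$ into a usable closed form. Applying the Leibniz rule to $K = \tfrac{2}{m}\Gamma X + \nabla\Gamma$, using (1.1) to eliminate $\mathcal{L}_X g$, and recognizing that the resulting $X^*\otimes X^*$ and $d\Gamma\otimes X^* + X^*\otimes d\Gamma$ terms combine into $\tfrac{1}{\Gamma}(K^*\otimes K^* - d\Gamma\otimes d\Gamma)$ via $K^* = \tfrac{2\Gamma}{m}X^* + d\Gamma$, one obtains
\[
\mathcal{L}_K g \;=\; -\frac{4\Gamma}{m}(\Ric - \lambda g) + \frac{1}{\Gamma}\bigl(K^*\otimes K^* - d\Gamma\otimes d\Gamma\bigr) + 2\Hess\Gamma.
\]
Since $\mathcal{L}_K g$ is symmetric, $\tfrac{1}{2}\int_M |\mathcal{L}_K g|^2\, dV = \int_M (\mathcal{L}_K g)^{ij}\nabla_i K_j\, dV$; substituting the above makes the $\lambda g$ term drop out by $\Div K = 0$ and leaves four pieces, which I will simplify one at a time by integration by parts. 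The Hessian piece, using the standard identity $\nabla^i(\mathcal{L}_K g)_{ij} = \Delta K_j + \Ric_{jk}K^k$ (under $\Div K = 0$), contributes $-2\int_M \Ric(\nabla\Gamma, K)\, dV$. The Ricci piece, via integration by parts and the contracted Bianchi identity $\nabla^i \Ric_{ij} = \tfrac{1}{2}\nabla_j R$, contributes $\tfrac{4}{m}\int_M \Ric(\nabla\Gamma, K)\, dV - \tfrac{2}{m}\int_M R\, K(\Gamma)\, dV$.

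The two remaining pieces, from $K^*\otimes K^*$ and $d\Gamma\otimes d\Gamma$, reduce after integration by parts (using $\Div K = 0$) to integrals involving $|K|^2 - |\nabla\Gamma|^2$ and $\Delta\Gamma$. Substituting the algebraic identity $|K|^2 - |\nabla\Gamma|^2 = \tfrac{4\Gamma^2}{m^2}|X|^2 + \tfrac{4\Gamma}{m}X(\Gamma)$, the relation $\Delta\Gamma = -\tfrac{2}{m}(\Gamma\Div X + X(\Gamma))$ (from $\Div K = 0$), and the traced quasi-Einstein identity $\Div X = n\lambda - R + |X|^2/m$ produces extensive pairwise cancellation of $|X|^2$ and $X(\Gamma)$ terms, leaving $\tfrac{2}{m}\int_M R\, K(\Gamma)\, dV$ as the only surviving contribution (the $n\lambda$ remainder vanishes since $\int_M K(\Gamma)\, dV = -\int_M \Gamma\Div K\, dV = 0$). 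Summing all four pieces, the two $\int_M R\, K(\Gamma)\, dV$ terms cancel and the net coefficient of $\int_M \Ric(\nabla\Gamma, K)\, dV$ becomes $\tfrac{4}{m} - 2 = -\tfrac{2(m-2)}{m}$, giving the desired identity. The main obstacle is the bookkeeping: the intermediate expressions are individually unwieldy, and the decisive cancellations only materialize after simultaneously invoking $\Div K = 0$, the formula for $\Delta\Gamma$, and the trace of (1.1) in the correct order; the final factor $(m-2)$ emerges as the mismatch between the $-2$ produced by the Hessian integration by parts and the $4/m$ produced by the Ricci integration by parts.
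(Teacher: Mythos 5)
Your proposal is correct and follows essentially the same route as the paper: you derive the same closed form for $\mathcal{L}_K g$ (the paper's equation (3.6)), and your termwise integration by parts against $\nabla_i K_j$ is exactly the paper's identity $\int_M \Div(\mathcal{L}_K g)(K)\, dV = -\tfrac{1}{2}\int_M |\mathcal{L}_K g|^2\, dV$ unpacked piece by piece, with the same use of the contracted Bianchi identity and the traced equation to cancel the scalar-curvature terms. The only cosmetic differences are that you eliminate $|K|^2 - |\nabla\Gamma|^2$ and $\Delta\Gamma$ via $X$ rather than via the trace of (3.6), and your justification of the Hessian piece should cite $\Div(\Hess\Gamma) = \Ric(\nabla\Gamma)^* + (\nabla\Delta\Gamma)^*$ (with the second term killed by $\Div K = 0$) rather than the Bochner identity for $K$ itself.
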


In [\ref{b12}], Colling-Dunajski-Kunduri-Lucietti find new quasi-Einstein metrics on $\mathbb{S}^2$ for general $m$.  They also find a Killing field $K$ in the form of Theorem 1.3 with $\Gamma$ being a function of one of the coordinates.  This gives an explicit example of the integral condition in Theorem 1.3 being satisfied with $m \neq 2$ and $\nabla \Gamma \neq 0$. \\

The integral condition in Threorem 1.3 can be used to provide an alternate proof to the result of Bahuaud-Gunasekaran-Kunduri-Woolgar which also applies when $m=-2$.  This leaves us with the following corollary: 
\begin{cor}
Let $(M, g, X)$ be a solution to (1.1) with $M$ closed and suppose $\Div X = 0$.  Then $X$ is Killing. 
\end{cor}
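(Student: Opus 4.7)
The plan is to apply Theorem 1.3 with the trivial choice $\Gamma \equiv 1$. Since $\Gamma$ is merely required to be a positive smooth function with $\Div K = 0$, any positive constant is admissible, and $\Gamma \equiv 1$ is the simplest option available.

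With this choice, $\nabla \Gamma \equiv 0$, so the vector field produced by Theorem 1.3 is
\[ K = \frac{2}{m}\Gamma X + \nabla \Gamma = \frac{2}{m} X. \]
The divergence hypothesis of Theorem 1.3 reduces to $\Div K = \frac{2}{m}\Div X$, which vanishes precisely by the assumption $\Div X = 0$. Furthermore, since $\nabla \Gamma = 0$, the integrand in the criterion of Theorem 1.3 vanishes identically:
\[ (m-2)\int_M \Ric(\nabla \Gamma, K)\, dV = 0. \]
Thus the biconditional in Theorem 1.3 gives that $K = \frac{2}{m} X$ is Killing, and since $m \neq 0$, the vector field $X$ itself is Killing.

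The proof requires no genuine obstacle; the whole point is that Theorem 1.3 was designed to absorb exactly this kind of hypothesis, and the divergence-free assumption on $X$ makes the constant-$\Gamma$ choice admissible. I would, however, explicitly remark that this argument places no restriction on $m$ other than $m \neq 0$: in particular it covers $m = -2$, which the original argument of Bahuaud–Gunasekaran–Kunduri–Woolgar excluded, thereby justifying the claim in the introduction that Theorem 1.3 yields an alternate proof valid in that missing case.
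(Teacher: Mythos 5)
Your proposal is correct and follows essentially the same route as the paper, which also takes $\Gamma$ to be a positive constant so that $K$ is a constant multiple of $X$ and the integrand $\Ric(\nabla\Gamma, K)$ vanishes identically. Your explicit observation that the argument covers $m=-2$ matches the remark already made in the paper's introduction.
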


\section{Constant Scalar Curvature}
    Before proving Theorem 1.2, we will start by proving a useful lemma. \\

     \begin{lem}
        Let $X$ be a smooth vector field on a compact Riemannian manifold $(M, g)$.  Then 
            \begin{equation}
               \int_M (\mathrm{\Div} X)^2 \, dV = -\int_M \nabla_X \mathrm{\Div} X \, dV,
            \end{equation} \label{int} where $dV$ is the volume form on $(M, g)$.
    \end{lem}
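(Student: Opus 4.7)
The plan is to reduce this to a single application of the divergence theorem via the product rule for divergence. Set $f = \Div X$, so the claim becomes
\[ \int_M f^2 \, dV = -\int_M \langle \nabla f, X \rangle \, dV. \]

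First I would recall the identity $\Div(fX) = f \Div X + \langle \nabla f, X\rangle$, valid for any smooth function $f$ and vector field $X$. Applying this with $f = \Div X$ yields
\[ \Div((\Div X)\, X) = (\Div X)^2 + \nabla_X \Div X. \]
Since $(M,g)$ is compact (and, as is standard in this context, without boundary, matching how the lemma will be used in the proof of Theorem 1.1), the divergence theorem gives $\int_M \Div((\Div X)\, X)\, dV = 0$. Integrating the displayed identity over $M$ and rearranging produces the stated equality.

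There is no substantive obstacle here: the argument is just the product rule plus Stokes' theorem. The only thing to be slightly careful about is the implicit hypothesis that $M$ has no boundary, which is needed to kill the boundary term in the divergence theorem; this is consistent with the setting of Theorem 1.1 where the lemma will be invoked.
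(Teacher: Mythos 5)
Your proof is correct and is essentially identical to the paper's: both apply the product rule $\Div(fX) = f\Div X + \langle\nabla f, X\rangle$ with $f = \Div X$ (the paper derives this identity in normal coordinates) and then integrate using Stokes' theorem. Your remark about the implicit no-boundary hypothesis is a fair observation consistent with how the lemma is used.
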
 
\begin{proof}
    For a smooth vector field $X$ and for a choice of coordinates so that $\nabla_{e_i}e_i = 0$ at a specified point $p \in M$, one can compute
        \begin{align}  \notag \Div(\Div(X)X) &=  \nabla_{e_i}\Div(X)X^{i} \\  \label{id} &=  \Div(X)\nabla_{e_i}X^i + g(X, \nabla_{e_i}\Div X)  \\  \notag &= (\Div X)^2+\nabla_X \Div X. \end{align} 
    Observe that the left-most and right-most expressions of (2.2) are coordinate-free, and thus this holds at all points $p \in M$. 
    Integrating both sides of (2.2), one immediately obtains (2.1) since 
        \[ \int_{M}  \Div(\Div(X)X) \, dV = 0  \] by Stokes' Theorem. 
\end{proof}
    
     \textit{Proof of Theorem 1.1: }First, suppose $X$ is Killing and that $(M, g, X)$ is a compact solution to (1.1) and $m \neq -2$.  Taking the trace of (1.1) yields 
        \begin{equation}
            \Scal + \Div X-\frac{1}{m} |X|^2 = \lambda n.
        \end{equation}  Here, we use $R$ to denote scalar curvature.  This can be rearranged to 
        \[ |X|^2 = -\lambda nm + m \Scal + m \Div X.  \]  Since $X$ is Killing, $\Div X = 0$.  In [\ref{b1}],Bahuaud-Gunasekaran-Kunduri-Woolgar show that $|X|$ must have constant norm.  Thus, (2.3) immediately tells us that $\Scal = const.$ as desired. \\
        \indent Conversely, let $(M, g)$ have constant scalar curvature.  Letting $c := \lambda n - \Scal$, we can use (2.3) to write $|X|^2 = m \Div X - mc$.  By assumption, $(M, g)$ has constant scalar curvature, so $\Scal = const.$ and thus $c$ is also a constant.  By equation (2.24) of [\ref{b1}], one has the following identity for triples $(M, g, X)$ satisfying (1.1):
            \begin{multline}
                \frac{2}{m}\Delta(|X|)^2 + \frac{2}{m}\nabla_X (|X|)^2 - \left ( \frac{1}{m} + \frac{1}{2} \right ) |\mathcal{L}_X g|^2 = 
                \\ -\Delta \Div X+ \left ( \frac{4}{m} + 1  \right ) \nabla_X (\Div X) - 2 \lambda \Div X + \frac{4}{m^2}|X|^2 \Div X + \frac{2}{m}( \Div X)^2.
            \end{multline}
        
       \noindent Substituting $m \Div X - mc$ in for $|X|^2$ and simplifying, one gets
            \begin{multline}
                3\Delta \Div X +  \left ( 1 - \frac{4}{m} \right ) \nabla_X \Div X - \left ( \frac{1}{m} + \frac{1}{2} \right ) |\mathcal{L}_X g|^2 \\  =  \left ( -2\lambda - \frac{4c}{m} \right )\Div X + \frac{6}{m}(\Div X)^2.
            \end{multline}
        Next, we integrate over the closed manifold $M$ to obtain
            \begin{equation}
    \left ( 1 - \frac{4}{m} \right ) \int_M \nabla_X \Div X \, dV = \left ( \frac{1}{m} + \frac{1}{2} \right ) \int_M |\mathcal{L}_X g|^2 \, dV + \frac{6}{m} \int_M (\Div X)^2 \, dV.
\end{equation}
    Observe that several terms have vanished due to Stokes' Theorem.  One can now use Lemma 2.1 to rewrite (2.6) as
        \begin{equation}
             -\left ( 1 + \frac{2}{m} \right ) \int_M (\Div X)^2 \, dV =  \left (\frac{1}{2} + \frac{1}{m} \right ) \int_M |\mathcal{L}_X g|^2 \, dV.
        \end{equation}
    When $m \neq -2$, one can divide by the coefficient on the right hand side to obtain
            \begin{equation}
             -2 \int_M (\Div X)^2 \, dV =   \int_M |\mathcal{L}_X g|^2 \, dV.
        \end{equation}
    
    \noindent Since the two sides of (2.8) have opposite sign, they must be identically zero.  Hence, when $m \neq -2$, this implies that $|\mathcal{L}_X g| = 0$, i.e. $X$ is Killing as desired. \hfill \qedsymbol{} \\
    

\begin{rk}
    As mentioned in the introduction, Case-Shu-Wei show that there do not exist any compact, gradient solutions to (1.1) with constant scalar curvature.  We can view this result as a corollary of Theorem 1.1.  Indeed, if  $(M, g, X)$ is such a solution and $X=\nabla f$, $X$ is Killing by Theorem 1.1.  Hence, $\Hess f = 0$.  But that means that $f$ must be constant by the maximum principle, and so $X = \nabla f = 0$ and so the solution must be trivial as desired. 
\end{rk}

\begin{rk}
    If a compact m-Quasi Einstein manifold $(M, g, X)$ has constant scalar curvature, $X$ is Killing by Theorem 1.1, and $|X|$ is constant as shown in [\ref{b1}].  Since $X$ is Killing, we have that $\mathrm{Ric} - \frac{1}{m}X^* \otimes X^* = \lambda g$.  Observe that the eigenvalues of the Ricci tensor are $\lambda$ and $\lambda + \frac{|X|^2}{m}$, which are constant since $|X|$ is constant.  It is therefore reasonable to ask whether or not the Ricci tensor is parallel in this case.  We can show this is not necessarily the case. Taking a covariant derivative on both sides with respect to some vector field $Y$, we obtain $\nabla_{Y} \mathrm{Ric} = \frac{1}{m} \nabla_Y (X \otimes X) = \frac{1}{m}\nabla_{Y}X \otimes X + \frac{1}{m} X \otimes \nabla_{Y}X$.  Plugging in $X$ and  $\nabla_Y X$ into this tensor yields $(\frac{1}{m}\nabla_{Y}X \otimes X + \frac{1}{m} X \otimes \nabla_{Y}X) (X, \nabla_Y X) = \frac{2}{m}|X|^2|\nabla_Y X|^2$.  Thus, $\mathrm{Ric}$ is parallel if and only if $X$ is parallel.  Examples of solutions $(M, g, X)$ with $(M,g)$ having constant scalar curvature and $X$ not parallel can be found in [\ref{b3}].
\end{rk}

\section{Killing Fields in General}

In [\ref{b2}], Dunajski and Lucietti show that in the $m=2$ case, a non-gradient solution $(M, g, X)$ to (1.1) with $M$ closed admits a non-trivial Killing field.  In this section, we aim to generalize their argument for arbitrary $m \neq 0$, following the same technique.  Unfortunately, for the $m \neq 2$ case, this technique does not seem to necessarily yield a Killing field.  However, our generalization will yield a sufficient condition for a solution to $(M, g, X)$ to produce a non-trivial Killing field.  We begin by defining a new vector field $K$ as
    \begin{equation}
        K = \frac{2}{m} \Gamma X + \nabla \Gamma
    \end{equation}
\noindent where $\Gamma > 0$ is a positive smooth function such that $\Div K = 0$.  By [\ref{b2}, Lemma 2.2], such a function exists.  The choice of the coefficient of $\frac{2}{m}$ is chosen strategically so that certain cross terms cancel when using (3.1) to rewrite (1.1) in terms of $K$ and $\Gamma$.  To do this, we can solve for $X$ to obtain $$X = \frac{K - \nabla \Gamma}{\frac{2}{m}\Gamma}.$$  Plugging in this expression for $X$ into (1.1) and solving for the $\Ric$ term yields

    \begin{equation}
         \Ric = -\frac{m}{4}\mathcal{L}_{\frac{K - \nabla \Gamma}{\Gamma}} g + \frac{m}{4} 
         \left ( \frac{K^* - d\Gamma}{\Gamma} \right ) \otimes 
         \left ( \frac{K^* - d\Gamma}{\Gamma} \right )
         + \lambda g.
    \end{equation}

\noindent Note that the factor of $\frac{2}{m}$ has already been moved outside each term for convenience.  Using multilinearity, we can expand the term involving $X^* \otimes X^*$, and rewrite it as 

\begin{equation} \frac{m}{4} 
         \left ( \frac{K^* - d\Gamma}{\Gamma} \right ) \otimes 
         \left ( \frac{K^* - d\Gamma}{\Gamma} \right ) = \frac{m}{4\Gamma^2}(K^* \otimes K^* - K^* \otimes d\Gamma - d\Gamma \otimes K^* + d\Gamma \otimes d\Gamma).
    \end{equation}
To deal with the Lie derivative term, we will first recall that the Lie derivative can be viewed as the symmetrization of the covariant derivative, i.e. $\mathcal{L}_{X} g (Y, Z) = g(\nabla_{Y}X, Z) + g(\nabla_{Z}X, Y)$.  Writing $X$ in terms of $K$ and simplifying, we see that

\begin{align}
\notag \mathcal{L}_{X} g (Y, Z) &= 
\frac{m}{2} \mathcal{L}_{\frac{K - \nabla \Gamma}{\Gamma}} g (Y, Z) 
\\ \notag  &= \frac{m}{2\Gamma} (g(\nabla_{Y}  (K - \nabla \Gamma), Z) + g(\nabla_{Z} (K - \nabla \Gamma), Y))  \\ & \quad - \frac{m \nabla_{Y} \Gamma}{2\Gamma^2}g(K - \nabla \Gamma, Z) -\frac{m\nabla_{Z}\Gamma}{2\Gamma ^2} g(K - \nabla \Gamma, Y) \\ \notag &= 
\frac{m}{2\Gamma} (\mathcal{L}_{K} g (Y, Z) - 2\Hess \Gamma (Y, Z)) - \frac{m \nabla_{Y} \Gamma}{2\Gamma^2}g(K, Z)
\\ \notag & \quad - \frac{m \nabla_{Z} \Gamma}{2\Gamma^2}g(K, Y) + \frac{m (\nabla_{Y}\Gamma)(\nabla_{Z}\Gamma)}{2\Gamma^2}+\frac{m (\nabla_{Z}\Gamma)(\nabla_{Y}\Gamma)}{2\Gamma^2}
\\ \notag  &=
 \frac{m}{2\Gamma} (\mathcal{L}_{K} g (Y, Z) - 2\Hess \Gamma (Y, Z)) 
 \\ \notag & \quad- \frac{m}{2\Gamma^2} (d\Gamma \otimes K^* (Y, Z) +  K^* \otimes d\Gamma (Y, Z) - 2d\Gamma \otimes d\Gamma (Y, Z)). 
\end{align}

\noindent Multiplying (3.4) by $-1/2$, adding the result to (3.3), and canceling terms, we obtain
    \begin{equation}
         \Ric = \frac{m}{4\Gamma^2}K^* \otimes K^* - \frac{m}{4\Gamma^2} d \Gamma \otimes d \Gamma - \frac{m}{4\Gamma}\mathcal{L}_K g + \frac{m}{2 \Gamma} \Hess \Gamma + \lambda g
    \end{equation}
which can be rewritten in terms of the Lie derivative term as
    \begin{equation}
          \mathcal{L}_K g = \frac{1}{\Gamma}K^* \otimes K^* - \frac{1}{\Gamma}d \Gamma \otimes d\Gamma - \frac{4\Gamma}{m}\Ric + 
            2\Hess \Gamma + \frac{4\Gamma \lambda}{m}g.
    \end{equation} \label{lie}

\noindent We now take the divergence of both sides of (3.6) to obtain

\begin{align}
         \notag \Div (\mathcal{L}_K g) &= \Div \left ( \frac{1}{\Gamma}K^* \otimes K^* \right )  - \Div \left ( \frac{1}{\Gamma}d \Gamma \otimes d\Gamma \right ) - \Div \left ( \frac{4\Gamma}{m}\Ric \right ) \\& \quad + 
         \Div \left (  2\Hess \Gamma \right ) + \Div \left ( \frac{4\Gamma \lambda}{m}g \right ).
\end{align}

To compute these individual divergences, we will repeatedly use the identity $\Div (\phi K) = \phi \Div K + g(\nabla \phi, K)$ for any smooth function $\phi$.  However, since $\Div K = 0$ by assumption, we have that $\Div (\phi K) = g(\nabla \phi, K)$.  We therefore compute

\begin{align}
    \Div \left ( \frac{1}{\Gamma}K^* \otimes K^* \right )  &=  \frac{1}{\Gamma}(\nabla_{K}K)^* - \frac{1}{\Gamma^2}g(\nabla \Gamma, K)K^*\\
    \Div \left (- \frac{1}{\Gamma} d\Gamma \otimes d\Gamma \right )  &=  -\frac{1}{\Gamma}\Delta \Gamma d\Gamma - \frac{1}{\Gamma}(\nabla_{\nabla \Gamma}\nabla \Gamma)^* + \frac{|\nabla \Gamma|^2}{\Gamma^2} d\Gamma\\ 
    \frac{-4}{m}\Div \left ( \Gamma \Ric \right ) &= -\frac{2}{m}\Gamma d\Scal - \frac{4}{m}\Ric(\nabla \Gamma)^* \\
    2 \Div (\Hess \Gamma) &= 2 \Ric(\nabla \Gamma)^* + 2(\nabla \Delta \Gamma)^*,
\end{align}

\noindent where in (3.10), we apply the twice contracted Ricci identity.  Now, we will contract equations (3.8)-(3.11) with $K$, add them, and throw back in the last term we neglected to obtain

\begin{align} \Div (\mathcal{L}_K g) (K) &= \frac{1}{\Gamma}g(\nabla_{K}K, K) - \frac{|K|^2}{\Gamma^2}g(\nabla \Gamma, K) - \frac{1}{\Gamma}\Delta \Gamma g(\nabla \Gamma, K) - \frac{1}{\Gamma}g(\nabla_{\nabla \Gamma} \nabla \Gamma, K) 
    \\ \notag & \quad + \frac{|\nabla \Gamma|^2}{\Gamma^2}g(\nabla \Gamma, K) - \frac{2}{m}\Gamma g(\nabla R, K) -\frac{4}{m}g(\Ric(\nabla \Gamma), K) \\ \notag & \quad + 2g(\Ric(\nabla \Gamma), K) 
      + 2g(\nabla \Delta \Gamma, K) +\frac{\lambda}{m} \Div (\Gamma g)(K). 
\end{align}

The last step is to integrate both sides of (3.12).  The condition that $K$ is Killing is equivalent to this integral being zero.  To see why, we have the identity
    \begin{equation}
        \int_{M} \Div(\mathcal{L}_K g) (K) \, dV = -\frac{1}{2}\int_{M} |\mathcal{L}_K g|^2 \, dV.
    \end{equation}
    
\noindent For a proof, see [\ref{b5}, Lemma 3.5].  Setting (3.13) equal to zero is clearly equivalent to requiring that $|\mathcal{L}_{K} g|^2 = 0$ (i.e. $K$ is Killing).  Integrating both sides of (3.12), we note that the integrand on the right hand side reduces drastically.  Firstly, the integral of the right-most term on the right-hand side of (3.12) vanishes since  

    \begin{equation}
        \int_{M} \Div \left ( \frac{4 \Gamma \lambda}{m} g \right ) (K) \, dV = \frac{4 \lambda}{m}\int_M (\Gamma \Div (g) (K) + g(\nabla \Gamma, K)) \, dV = 0.
    \end{equation}

Furthermore, we have that 
    \begin{equation}
        2g(\nabla \Delta \Gamma, K) = 2\Div (\Delta \Gamma K)
    \end{equation}

\noindent since we are assuming $\Div K = 0$.  Thus, this term integrates to zero by Stokes' Theorem.  Next, we will show that many of these terms will cancel with the term $\frac{4}{m}g(\Ric(\nabla \Gamma), K)$.  To see why and how, we will first consider the trace of equation (3.6).  This yields

    \begin{equation}
        2\Div K = \frac{1}{\Gamma}|K|^2-\frac{1}{\Gamma}|\nabla \Gamma|^2-\frac{4 \Gamma}{m}\Scal + 2\Delta \Gamma + \frac{4\Gamma \lambda n}{m}.
    \end{equation}

\noindent Dividing the above by $\Gamma$ (which is possible since $\Gamma > 0$ by assumption), recalling $\Div K = 0$, and rearranging, we obtain

    \begin{equation}
        \frac{4}{m}R = \frac{1}{\Gamma^2}|K|^2-\frac{1}{\Gamma^2}|\nabla \Gamma|^2 + 2\frac{\Delta \Gamma}{\Gamma} + \frac{4\lambda n}{m}.
    \end{equation}

\noindent Taking the covariant derivative of both sides, multiplying by $\frac{\Gamma}{2}$, and contracting with $K$, one obtains

    \begin{align} \frac{2}{m}\Gamma g(\nabla R, K) 
      \notag & =  \frac{1}{\Gamma}g(\nabla_{K}K, K) - \frac{1}{\Gamma}g(\nabla_{\nabla \Gamma}\nabla \Gamma, K)  \\  \notag & \quad - 
     \frac{|K|^2}{\Gamma^2}g(\nabla \Gamma, K) + 
            \frac{|\nabla \Gamma|^2}{\Gamma^2}g(\nabla \Gamma, K)   \\ & \quad + 
       g(\nabla \Delta \Gamma, K) - \frac{\Delta \Gamma}{\Gamma}g(\nabla \Gamma, K).
    \end{align}

\noindent Substituting the right hand side of equation (3.18) into equation (3.12), one observes that almost all the terms will either cancel or integrate to zero (by Stokes' Theorem), with the exception of the two terms involving $\Ric (\nabla \Gamma, K)$.  After cancelling terms in (3.12), integrating on both sides, and recalling (3.13), one obtains

    \begin{equation}
         -\frac{1}{2} \int_{M} |\mathcal{L}_{K} g|^2 \, dV =  \int_{M} \Div( \mathcal{L}_{K} g) (K) = \frac{2m-4}{m} \int_{M} \Ric(\nabla \Gamma, K) \, dV.
    \end{equation}

\noindent Hence, $K$ is Killing exactly when 

    \begin{equation}
        \frac{2m-4}{m} \int_{M} \Ric(\nabla \Gamma, K) \, dV = 0.
    \end{equation}

\noindent Observe that $K$ is always Killing when $m=2$, which was shown in [\ref{b2}] by Dunajski and Lucietti.  Multiplying (3.20) by $m/2$ proves Theorem 1.3. \\

 \textit{Proof of Corollary 1.4: }As an interesting special case, consider the case where $\Div X = 0$.  In this case, we can take $\Gamma = const.$, and $K$ is some constant multiple of $X$.  Therefore $\Ric (\nabla \Gamma, K) = \Ric (0, K) = 0$.  Hence, the integral (3.20) is zero for any $m \neq 0$.  This shows that $X$ is Killing whenever $\Div X = 0$.  This is an alternate method of proving a result which was obtained in [\ref{b1}]. \\

\noindent \textbf{Acknowledgements. }The author of this paper would like to thank his advisor, Will Wylie, for his continuous support and helpful feedback he has given.  He would also like the thank James Lucietti, Eric Bahuaud, and the referee for taking the time to read a draft of this paper and for providing their own insights and comments.

\end{document}